\numberwithin{equation}{section}
\DeclarePairedDelimiter\abs{\lvert}{\rvert}%
\DeclarePairedDelimiter\norm{\lVert}{\rVert}%
\DeclarePairedDelimiter\bnorm{\biggl\lVert}{\biggr\rVert}%
\let\oldabs\abs
\renewcommand{\abs}{\@ifstar{\oldabs}{\oldabs*}}
\let\oldnorm\norm
\renewcommand{\norm}{\@ifstar{\oldnorm}{\oldnorm*}}
\theoremstyle{plain}
\newtheorem{thm}{Theorem}[section]
\newtheorem{theorem}{Theorem}
\newtheorem{lem}[thm]{Lemma}
\newtheorem{prop}[thm]{Proposition}
\theoremstyle{definition}
\theoremstyle{remark}
\newcommand\R{\mathbb{R}}
\newcommand\N{\mathbb{N}}
\newcommand\C{\mathbb{C}}
\newcommand\T{\mathcal{T}}
\newcommand\K{\mathcal{K}}
\newcommand\M{\mathcal{M}}
\newcommand{\ben}{\begin{enumerate}[(i)]}
\newcommand{\een}{\end{enumerate}}
\newcommand{\ft}{\mathcal{F}}
\newcommand{\ftd}{\mathcal{F}_{\mathbb{R}^d}}
\newcommand{\ift}{\mathcal{F}^{-1}}
\newcommand{\h}{\mathcal{H}_d}
\newcommand{\les}{\lesssim}
\newcommand{\mt}{\tilde{\mu}_d}
\newcommand{\md}{\mu_d}
\newcommand{\mds}{d\mu_d(s)}
\begin{document}
\title[Maximal Operators Associated with Radial Fourier Multipliers]{A Characterization of Maximal Operators Associated with Radial Fourier Multipliers}
\subjclass[2000]{42B15, 42B25}
%\date{\today}
\author{Jongchon Kim}
\address{Department of Mathematics, University of Wisconsin-Madison, Madison, WI,
53706 USA}
\email{jkim@math.wisc.edu}
\begin{abstract} 
We give a simple necessary and sufficient condition for maximal operators associated with radial Fourier multipliers to be bounded on $L^p_{rad}$ and $L^p$ for certain $p$ greater than $2$. The range of exponents obtained for the $L^p_{rad}$ characterization is optimal for the given condition. The $L^p$ characterization is derived from an inequality of Heo, Nazarov, and Seeger regarding a characterization of radial Fourier multipliers.
\end{abstract}
\maketitle

\section{Introduction}
Let $T_m$ be a Fourier multiplier transformation on $\R^d$ defined by 
\begin{equation*}
\widehat{T_mf} = m \hat{f}
\end{equation*}
for a function $m$. There are sufficient conditions for $L^p$ boundedness of $T_m$ such as Mikhlin-H\"{o}rmander multiplier theorem, but a necessary and sufficient condition is known only for $p=1$ and $p=2$ (see e.g. \cite{StSin}). However, Garrig\'{o}s and Seeger \cite{GaSe} obtained a striking characterization of radial multiplier transformations that are bounded on the subspace $L^p_{rad}$ of radial $L^p$ functions for $1<p<\frac{2d}{d+1}$, which is optimal for the characterization. By duality, this also implies a characterization result in the dual range $\frac{2d}{d-1}<p'<\infty$. More recently, Heo, Nazarov, and Seeger \cite{HNS} proved that the necessary and sufficient condition given in \cite{GaSe} actually gives a characterization of radial multiplier transformations that are bounded on the entire $L^p$ space, provided that the dimension is sufficiently large. 

Furthermore, for the maximal operators $M_m$ defined by
\begin{equation*}
M_mf(x) = \sup_{t>0} |T_{m(\cdot/t)} f(x)|,
\end{equation*}
Garrig\'{o}s and Seeger \cite{GaSeM} gave a necessary and sufficient condition for the $L^p_{rad}$ boundedness of $M_m$ for radial $m$ compactly supported away from the origin. Assume that $m$ is supported in $\{\xi: \frac{1}{2} < |\xi| < 2\}$. Let $\ft$ be the Fourier transform, $K_t = \ift [m(\cdot/t)]$ be the associated convolution kernel, $1<p<\frac{2d}{d+1}$, $p\leq q \leq \infty$, and $I=[1,2]$. It was shown in \cite{GaSeM} that $M_m$ extends to a bounded operator from $L^p_{rad}$ to $L^{p,q}_{rad}$ if and only if
\begin{equation} \label{eqn:conGSM}
\bnorm{\sup_{t\in I} |K_t| }_{L^{p,q}} < \infty,
\end{equation}
where $L^{p,q}=L^{p,q}(\R^d)$ is the standard Lorentz space.

A radial multiplier of great interest is the (truncated) Bochner-Riesz multiplier 
\begin{equation*}
m^\lambda(\xi) = (1-|\xi|^2)_+^\lambda \chi(|\xi|)
\end{equation*} 
for a suitable cut-off function $\chi$ supported in $[1/2,2]$. We note that the $L_{rad}^p$ boundedness of the maximal Bochner-Riesz operators for the optimal $p$-range was obtained by Kanjin \cite{Kan}. See also \cite{Col}.

It is natural to ask if there is a characterization of maximal operators $M_m$ that are bounded on $L^{p'}_{rad}$ for the dual range $\frac{2d}{d-1}< p'<\infty$. We answer the question in the affirmative. Here and in the following statements, we let $K = \ift [m]$, and the multiplier $m$ is always assumed to be bounded, radial, and supported in $\{\xi\in \R^d: \frac{1}{2} < |\xi| < 2\}$.
\begin{thm} \label{thm:radial}
Let $d\geq 2$, $1<p<\frac{2d}{d+1}$, and $p \leq q \leq \infty$. Then $M_m$ extends to a bounded operator from $L^{p',q'}_{rad}$ to $L_{rad}^{p'}$ if and only if 
\begin{equation} \label{eqn:nece}
\norm{K}_{L^{p,q}} < \infty. 
\end{equation}
\end{thm}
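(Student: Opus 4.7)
\emph{Necessity.} The pointwise inequality $|T_mf(x)|\leq M_mf(x)$ together with the hypothesis yields $\|T_mf\|_{L^{p'}}\lesssim\|f\|_{L^{p',q'}}$ for every radial $f$. Dualizing (using $(L^{p,q})^*=L^{p',q'}$ for $q<\infty$, with the $q=\infty$ case treated via $L^{p,1}$) gives $T_{\bar m}:L^p_{rad}\to L^{p,q}_{rad}$ bounded. Testing against a fixed radial Schwartz function $\phi$ with $\widehat\phi\equiv 1$ on $\{1/2<|\xi|<2\}\supset\supp m$ produces $T_{\bar m}\phi=\bar K$, whence
\begin{equation*}
\|K\|_{L^{p,q}}=\|T_{\bar m}\phi\|_{L^{p,q}}\lesssim\|T_{\bar m}\|_{L^p_{rad}\to L^{p,q}_{rad}}\cdot\|\phi\|_{L^p}<\infty.
\end{equation*}

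\emph{Sufficiency.} Assume $\|K\|_{L^{p,q}}<\infty$. Linearize $M_m$ with a measurable $t:\R^d\to(0,\infty)$ so that $M_mf(x)=|T_{m(\cdot/t(x))}f(x)|$. Since $f$ and $m$ are radial, $T_{m(\cdot/t)}f$ is radial for each $t$, so we may take $t(x)=\tau(|x|)$. Boundedness of $M_m:L^{p',q'}_{rad}\to L^{p'}_{rad}$ is then equivalent, by duality of the fixed linearization, to the uniform bound
\begin{equation*}
\bnorm{\int_{\R^d}g(x)K_{\tau(|x|)}(\cdot-x)\,dx}_{L^{p,q}_{rad}}\lesssim\|K\|_{L^{p,q}}\|g\|_{L^p_{rad}}
\end{equation*}
over radial $g$ and radial $\tau$. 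Partition $\R^d$ into the dyadic level sets $E_k:=\{x:\tau(|x|)\in[2^k,2^{k+1})\}$ and decompose $g=\sum_kg_k$ with $g_k=g\chi_{E_k}$; the spatial pieces are disjointly supported so $\|g\|_{L^p}^p=\sum_k\|g_k\|_{L^p}^p$. Because $m(\cdot/\tau)$ has Fourier support in $\{\tau/2<|\xi|<2\tau\}$, the $k$-th piece has Fourier transform supported in the fixed dyadic annulus $\{2^{k-1}<|\xi|<2^{k+2}\}$; a Littlewood--Paley square-function estimate then reduces the sum over $k$ to an $\ell^p$-sum of single-scale estimates. Each single-scale estimate follows from (the dual of) the Garrig\'os--Seeger--Miguel local maximal bound \cite{GaSeM} applied to $\sup_{t\in[2^k,2^{k+1})}|T_{m(\cdot/t)}|$, together with the radial Hardy--Littlewood maximal bound $\|\sup_{t\in[1,2]}|K_t|\|_{L^{p,q}}\lesssim\|K\|_{L^{p,q}}$; the scale-invariant operator norm of $T_{m(\cdot/t)}$ cancels the $2^{kd/p'}$ growth of the rescaled kernel norm, leaving a clean bound by $\|K\|_{L^{p,q}}\|g_k\|_{L^p}$.

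\emph{Main obstacle.} The sufficiency direction, and within it, the assembly of the dyadic pieces in the Lorentz norm $L^{p,q}$ rather than $L^p$. Executing the Littlewood--Paley square-function step cleanly in the Lorentz scale requires Lorentz versions of the standard vector-valued inequalities, obtained by real interpolation. The $q=\infty$ endpoint is a further subtlety, since the natural duality $L^{p,\infty}\leftrightarrow L^{p,1}$ is non-reflexive; one must argue by testing against $L^{p,1}$-functions directly in that case. A secondary technical point is justifying the reduction $t(x)=\tau(|x|)$ for the linearization on the maximal operator of a radial function, but this is a standard measurable-selection argument given the radial symmetry of each $T_{m(\cdot/t)}f$.
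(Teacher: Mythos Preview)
Your necessity argument is essentially the paper's: dominate $T_m$ by $M_m$, dualize to get $T_{\bar m}:L^p_{rad}\to L^{p,q}_{rad}$, and test on a good Schwartz function.

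The sufficiency argument, however, has a genuine gap. You invoke ``the radial Hardy--Littlewood maximal bound $\|\sup_{t\in[1,2]}|K_t|\|_{L^{p,q}}\lesssim\|K\|_{L^{p,q}}$'', but this inequality is \emph{false} in general and is not a Hardy--Littlewood-type statement (it is a pointwise sup over dilations, not an average). Indeed, the paper explicitly notes that condition \eqref{eqn:conGSM} is \emph{strictly stronger} than \eqref{eqn:nece}; for a band-limited radial $\kappa$ one can make $\sup_{t\in[1,2]}|\kappa(t\cdot)|$ pick up the envelope of an oscillatory function whose $L^{p,q}$ norm is much smaller. Without this inequality your appeal to \cite{GaSeM} collapses: that paper characterizes $M_m:L^p_{rad}\to L^{p,q}_{rad}$ with constant $\|\sup_{t\in I}|K_t|\|_{L^{p,q}}$, and dualizing its linearization only yields an $L^{p',q'}\to L^{p'}$ bound for the \emph{adjoint}, which is exactly the statement you are trying to prove --- so you cannot use it as input. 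In short, the single-scale estimate you need (an $L^p\to L^{p,q}$ bound for $g\mapsto\int g(x)K_{\tau(|x|)}(\cdot-x)\,dx$ with constant $\|K\|_{L^{p,q}}$) is the heart of the matter and is not supplied by \cite{GaSeM}.

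The paper's route is substantially different. After the same Littlewood--Paley reduction to $t\in[1,2]$ and the same dualization, it passes to the Hankel picture on $\R_+$ and uses the pointwise kernel estimate of Garrig\'os--Seeger \cite[Proposition~3.1]{GaSe}, which controls the Hankel kernel $K(r,s)$ by four terms of the form $[(1+r)(1+s)]^{-(d-1)/2}\,W[f(s)](\pm r\pm s)$ with $W$ built from the one-dimensional transform $\kappa=\ft_\R^{-1}m$. The operator is then split into three regions according to whether $s\gg r$, $s\sim r$, or $s\ll r$; each piece is estimated by Minkowski/H\"older together with weighted $L^{p,q}(\tilde\mu_d)$ bounds for $W$ (these use only $\|K\|_{L^{p,q}}$, via the quantity $A(p,q)$), and the pieces are assembled by real interpolation. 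No control of $\sup_t|K_t|$ is ever needed.
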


In fact, our result shows that
\begin{equation*}
\norm{T_m}_{L^p_{rad} \to L^{p,q}_{rad}}  \approx \norm{M_m}_{L^{p',q'}_{rad} \to L^{p'}_{rad}} \approx \norm{K}_{L^{p,q}}
\end{equation*}
for the $p,q$ range of Theorem \ref{thm:radial} \footnote{We note that $\norm{T_m}_{L^p_{rad} \to L^{p,q}_{rad}}  \approx \norm{K}_{L^{p,q}}$ is known in a greater generality (see \cite{GaSe}).}, where we denote by $A \approx B$ the inequality $C^{-1} B \leq A \leq C B$ for an absolute constant $C=C_{d,p,q}>0$.

Theorem \ref{thm:radial} was stated originally with the condition
\begin{equation} \label{eqn:nece2} \sup_{\norm{g}_{L^1(I)}=1} \norm{\int_I g (t) K_t dt }_{L^{p,q}} < \infty.
\end{equation}
That we may replace \eqref{eqn:nece2} by \eqref{eqn:nece} was kindly pointed out to us by Andreas Seeger.

Since \eqref{eqn:nece} is weaker than \eqref{eqn:conGSM}, we see that $L_{rad}^{p}$ bounds of $M_m$ for $1<p<\frac{2d}{d+1}$ implies $L_{rad}^{p'}$ bounds as well.  This ``dual" result also relies on the kernel estimate given in \cite{GaSe}.

Let us return to the example $m^\lambda$, whose kernel $K^\lambda = \ft^{-1}[m^\lambda]$ satisfies
\begin{equation*}
|K^\lambda(x)| \leq C (1+|x|)^{-(\frac{d+1}{2} + \lambda)}.
\end{equation*}
Then, for $p=\frac{2d}{d+1+2\lambda}$, $M_{m^\lambda}$ extends to a bounded operator from $L_{rad}^p$ to $L_{rad}^{p,\infty}$ by \cite{GaSeM} and from $L_{rad}^{p',1}$ to $L_{rad}^{p'}$ by Theorem \ref{thm:radial}. Real interpolation between the two endpoint estimates recovers the result of Kanjin \cite{Kan}.

It was pointed out in \cite{GaSeM} that the characterization by the condition \eqref{eqn:conGSM} is false if $L^p_{rad}$ is replaced by $L^p$. A counter example was given by Tao \cite{TaoB}, regarding the maximal Bochner-Riesz operators for $p<2$.

On the other hand, we observe that an important inequality of \cite{HNS}, which implies a characterization of radial Fourier multipliers, also implies a characterization of maximal operators $M_m$ that are bounded on the entire $L^{p'}$ space. 

\begin{thm}\label{thm:Max}
Let $1<p<\frac{2(d-1)}{d+1}$, $p\leq q\leq \infty$, and $d\geq 4$. Then $M_m$ extends to a bounded operator from $L^{p',q'}$ to $L^{p'}$ if and only if $\norm{K}_{L^{p,q}} < \infty$.
\end{thm}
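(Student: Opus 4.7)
The strategy parallels that of Theorem~\ref{thm:radial}: linearize the maximal operator, dualize, and invoke the main inequality of \cite{HNS} in place of the radial kernel estimate from \cite{GaSeM}.

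For necessity, taking $t=1$ gives $M_m f(x) \geq |T_m f(x)|$, so $\norm{T_m}_{L^{p',q'} \to L^{p'}} \leq \norm{M_m}_{L^{p',q'} \to L^{p'}}$ and by duality $\norm{T_m}_{L^p \to L^{p,q}} \les \norm{M_m}_{L^{p',q'} \to L^{p'}}$. Choosing a Schwartz function $\psi$ with $\widehat{\psi} \equiv 1$ on an annulus containing $\supp m$ yields $T_m \psi = K$, so
\[
\norm{K}_{L^{p,q}} = \norm{T_m\psi}_{L^{p,q}} \les \norm{M_m}_{L^{p',q'} \to L^{p'}}.
\]

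For sufficiency, assume $\norm{K}_{L^{p,q}} < \infty$. I linearize the maximal operator: for a measurable dyadic selector $t(x) = 2^{j(x)}$ with $j \colon \R^d \to \Z$, set $\mathcal{T}^t f(x) := K_{t(x)} \ast f(x)$. Standard approximation, using the continuity of $t \mapsto T_{m(\cdot/t)}f$ on Schwartz functions, reduces the bound $\norm{M_m}_{L^{p',q'} \to L^{p'}} < \infty$ to the uniform bound $\sup_{t} \norm{\mathcal{T}^t}_{L^{p',q'} \to L^{p'}} < \infty$ over such selectors, and this is in turn equivalent, by duality, to the uniform bound $\sup_{t} \norm{(\mathcal{T}^t)^*}_{L^p \to L^{p,q}} < \infty$. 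Setting $E_j := \{x : j(x) = j\}$ and using that $K_{2^j}$ is radial, a direct calculation gives
\[
(\mathcal{T}^t)^* g(y) = \sum_{j \in \Z} F_j \ast K_{2^j}(y), \qquad F_j := g\chi_{E_j},
\]
and the disjointness of the $E_j$ yields $\bigl(\sum_j \norm{F_j}_{L^p}^p\bigr)^{1/p} = \norm{g}_{L^p}$.

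The argument is then closed by invoking the Lorentz-space version of the key inequality of \cite{HNS}:
\begin{equation*}
\bnorm{\sum_{j \in \Z} F_j \ast K_{2^j}}_{L^{p,q}(\R^d)} \les \norm{K}_{L^{p,q}} \Bigl(\sum_{j \in \Z} \norm{F_j}_{L^p}^p\Bigr)^{1/p},
\end{equation*}
valid for $1 < p < \tfrac{2(d-1)}{d+1}$, $p \leq q \leq \infty$, $d \geq 4$, and radial $m$ supported in $\{\tfrac{1}{2} < |\xi| < 2\}$. The main obstacle is precisely this Lorentz extension: the original HNS paper establishes the $L^p$ case ($q = p$), corresponding to their characterization $\norm{T_m}_{L^p \to L^p} \approx \norm{K}_{L^p}$. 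Upgrading to $p < q \leq \infty$ should follow either by tracking the $L^{p,q}$ quasi-norm through the atomic decomposition of $K$ and the $L^2$-orthogonality argument in \cite{HNS}, which are compatible with Lorentz structure, or by real interpolation of the family of $L^{p_0}$-estimates for $p_0$ varying in an open neighborhood of $p$ within the valid range.
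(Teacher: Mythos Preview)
Your necessity argument is fine. The sufficiency argument has a genuine gap: restricting the selector to dyadic values $t(x)=2^{j(x)}$ only controls the dyadic maximal operator $\sup_{j\in\Z}|K_{2^j}\ast f|$, which is strictly weaker than $M_m f=\sup_{t>0}|K_t\ast f|$. The ``standard approximation'' you invoke lets one pass to a countable \emph{dense} set of parameters, not to the dyadic lattice; and once the $t_j$ range over a dense set, the adjoint $\sum_j(g\chi_{E_j})\ast K_{t_j}$ can no longer be fed into the HNS inequality~\eqref{eqn:HNS}, because that inequality is formulated with a fixed bump $\psi$ whose Fourier transform lives near $|\xi|\sim 1$ and hence reproduces $K_t$ only for $t$ in a bounded interval. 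Relatedly, the displayed inequality you attribute to \cite{HNS} over all $j\in\Z$ is not what Theorem~\ref{thm:HNS} says; even for $q=p$ it already encodes a Littlewood--Paley step that you have not carried out.

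The paper supplies exactly this missing reduction: since $p'>2$, Littlewood--Paley theory reduces $\sup_{t>0}$ to $\sup_{t\in I}$ with $I=[1,2]$. One then dualizes with a \emph{continuous} parameter $t\in I$ to
\[
\Bigl\lVert\int_I K_t\ast g_t\,dt\Bigr\rVert_{L^{p,q}}\lesssim\lVert K\rVert_{L^{p,q}}\Bigl\lVert\int_I|g_t|\,dt\Bigr\rVert_{L^p},
\]
rewrites $\int_I K_t\ast g_t\,dt=\int_{\R^d}\int_0^\infty\psi\ast\sigma_r(\cdot-y)\,h(y,r)\,dr\,dy$ with $h(y,r)=\int_I\kappa_t(r)f_t(y)\,dt$, and applies~\eqref{eqn:HNS} together with real interpolation to get the Lorentz version~\eqref{eqn:HNSv}. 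The point you lose by going dyadic is precisely that the restriction $t\in I$ is what allows a single $\psi$ to reproduce all the $K_t$ simultaneously and what makes $\lVert\kappa_t\rVert_{L^{p,q}(r^{d-1}dr)}\approx\lVert K\rVert_{L^{p,q}}$ uniformly in $t$.
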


We emphasize that this result does not improve on the range of exponents in the endpoint $L^{p',1} \to L^{p'}$ bounds for the maximal Bochner-Riesz operators established in \cite{LRS} for the Stein-Tomas range $1<p<\frac{2(d+1)}{d+3}$. For the pointwise convergence of Bochner-Riesz means, see \cite{CRV} and also \cite{LS} for an endpoint result.

Before we state the inequality from \cite{HNS}, we recall some notations. Let $\sigma_r$ be the surface measure on the $(d-1)$-dimensional sphere of radius $r$ centered at the origin. Let $\psi_0$ be a smooth radial function compactly supported in $\{x:|x|\leq 1/10\}$ whose Fourier transform vanishes at the origin to a high order but not on $\{ \xi: 1/8 \leq  |\xi| \leq 8 \}$. Set $\psi = \psi_0*\psi_0$.

\begin{theorem}[\cite{HNS}] \label{thm:HNS} Let $1<p< \frac{2(d-1)}{d+1}$ and $d\geq 4$. Then
\begin{equation} \label{eqn:HNS}
\norm{ \int_{\R^d} \int_1^\infty h(y,r) \sigma_r*\psi(\cdot - y) drdy }_{L^p}^p \leq C 
\int_{\R^d} \int_1^\infty |h(y,r)|^p r^{d-1} dr dy.
\end{equation}
\end{theorem}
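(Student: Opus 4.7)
The plan is to dualize and reduce \eqref{eqn:HNS} to a weighted mixed-norm bound for the smoothed spherical averaging operator $A_r f := \sigma_r * \psi * f$. By duality, the stated inequality is equivalent to
\begin{equation*}
\Big(\int_{\R^d}\int_1^\infty |A_r f(y)|^{p'}\, r^{d-1}\, dr\, dy\Big)^{1/p'} \le C \norm{f}_{L^{p'}(\R^d)}.
\end{equation*}
Since $\hat\psi$ is supported in the annulus $\{1/8\le|\xi|\le 8\}$, a Littlewood--Paley reduction lets me assume $\hat f$ is also supported in $\{|\xi|\sim 1\}$; in this frequency-localized regime $A_r f(y)$ is, up to harmless symbols, a half-wave propagator evaluated at the space-time point $(y,r)$.

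The $L^2$ bound is immediate from $|\widehat{\sigma_r}(\xi)|\lesssim r^{(d-1)/2}$ on $\{|\xi|\sim 1\}$, giving $\norm{A_r f}_{L^2_y}\lesssim r^{(d-1)/2}\norm{f}_{L^2}$; this is not integrable against $r^{d-1}dr$, so one must exploit the oscillation $\widehat{\sigma_r}(\xi)\sim r^{(d-1)/2}\sum_{\pm} c_\pm(r|\xi|) e^{\pm i r|\xi|}$ in $r$ rather than pointwise size. The natural framework is that of local-smoothing / Strichartz estimates for the wave equation, and the sharpness of the threshold $p<\tfrac{2(d-1)}{d+1}$ is consistent with the Stein--Tomas restriction exponent for $S^{d-1}$.

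Returning to \eqref{eqn:HNS} directly, my approach would be to perform an atomic decomposition of $h$ based on the level sets of a suitable maximal function in $(y,r)$, normalized so that each atom has uniformly bounded $L^\infty$ norm on a set of controlled measure. For atoms of small density, I would sum the contributions using $L^2$ orthogonality among wave packets associated to well-separated pieces of spheres, and then upgrade to $L^p$ using the $p$-power inequality $|\sum|^p\le\sum|\cdot|^p$ together with the essentially disjoint spatial supports of the shells. For atoms of high density, one needs Kakeya-type incidence bounds controlling how many thin spherical shells of distinct radii can cluster near a common point; this is where the dimensional assumption $d\ge 4$ and Wolff--Tao style bilinear cone restriction inputs enter.

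The main obstacle, and the hardest technical step, is precisely the geometric counting for high-density atoms: wave packets on spheres of different radii are not disjoint, and their clustering must be controlled sharply enough to reach the exponent $\tfrac{2(d-1)}{d+1}$. I expect this incidence estimate, rather than the harmonic-analytic estimates for individual packets, to carry the proof, with the gap between the orthogonality range and the final exponent closed by an induction-on-scales argument in the frequency parameter.
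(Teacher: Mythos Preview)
This theorem is not proved in the present paper; it is quoted from \cite{HNS} and used as a black box in Section~\ref{sec:MaxHNS}. There is therefore no proof here to compare your proposal against.

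That said, two substantive issues in your sketch are worth flagging. First, your dualization is incorrect: the dual of $L^p(\R^d\times[1,\infty);\,dy\,r^{d-1}dr)$ pairs with the weight $r^{d-1}dr$, so the adjoint sends $f$ to $(y,r)\mapsto r^{-(d-1)}A_r f(y)$, and the dual inequality carries the weight $r^{(d-1)(1-p')}$, a large \emph{negative} power of $r$, not $r^{d-1}$. With your weight the inequality is false already for Schwartz $f$, since $\|A_r f\|_{L^{p'}_y}$ does not decay in $r$.

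Second, your plan for the hard part mischaracterizes the Heo--Nazarov--Seeger argument. Their proof does use a density decomposition of $h$, but the decisive input is an elementary $L^2$ inequality based on explicit bounds for the inner products $\langle \sigma_r*\psi(\cdot-y),\,\sigma_{r'}*\psi(\cdot-y')\rangle$, combined with a combinatorial dyadic-pigeonholing scheme; it does \emph{not} invoke Wolff--Tao bilinear cone restriction or Kakeya-type incidence bounds. Indeed, a notable feature of \cite{HNS} is that the exponent $\tfrac{2(d-1)}{d+1}$ is reached without such machinery, and the restriction $d\ge 4$ arises from the numerology of their $L^2$/support-size interpolation, not from the availability of deep geometric estimates. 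Your outline, as written, would at best recover a weaker range and leaves the central mechanism of \cite{HNS} unidentified.
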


A consequence of Theorem \ref{thm:HNS} is the estimate $\norm{T_m}_{L^p \to L^{p,q}} \approx \norm{K}_{L^{p,q}}$ (see Section 5 and 9 of \cite{HNS}). Without much additional work, we strengthen this implication as follows. 

\begin{lem} \label{prop:MaxHNS} Assume that \eqref{eqn:HNS} holds for $1<p< p_d$ for some $p_d$. Let $1<p<p_d$ and $p \leq q \leq \infty$. Then
\begin{equation*}
\norm{M_m}_{L^{p',q'} \to L^{p'}} \approx \norm{K}_{L^{p,q}}.
\end{equation*}
\end{lem}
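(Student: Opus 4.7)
The lower bound $\|K\|_{L^{p,q}}\lesssim \|M_m\|_{L^{p',q'}\to L^{p'}}$ is immediate: since $M_mf\ge|T_mf|$ pointwise (take $t=1$), we have $\|M_m\|_{L^{p',q'}\to L^{p'}}\ge \|T_m\|_{L^{p',q'}\to L^{p'}}$, and by Lorentz duality applied to the radial multiplier, $\|T_m\|_{L^{p',q'}\to L^{p'}}=\|T_{\bar m}\|_{L^p\to L^{p,q}}\approx\|K\|_{L^{p,q}}$ using the consequence of \eqref{eqn:HNS} cited in the excerpt.

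For the upper bound I would linearize and adapt the HNS proof of $\|T_m\|_{L^p\to L^{p,q}}\lesssim \|K\|_{L^{p,q}}$ (Sections 5 and 9 of \cite{HNS}) to the linearized operator. Fix a measurable $\tau:\R^d\to(0,\infty)$ and set $T_\tau f(x):=K_{\tau(x)}*f(x)$; by duality it is enough to show the uniform bound $\|T_\tau^*g\|_{L^{p,q}}\lesssim\|K\|_{L^{p,q}}\|g\|_{L^p}$, where $T_\tau^*g(y)=\int K_{\tau(x)}(x-y)g(x)\,dx$. Since $\hat\psi$ is bounded below on $\supp m$, factor $K=\tilde K*\psi$ with $\tilde K=\ift[m/\hat\psi]$; the polar profile $\tilde\rho$ of $\tilde K$ satisfies $\|\tilde\rho\|_{L^{p,q}(r^{d-1}dr)}\approx\|K\|_{L^{p,q}}$. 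The polar representation of the radial convolution $\tilde K_t*g$, together with the scaling identity $\sigma_{s/t}*\psi_t(z)=t(\sigma_s*\psi)(tz)$, then gives
\[
T_\tau^*g(y)=\int_0^\infty\tilde\rho(s)\int\tau(x)^{d-1}g(x)\,\sigma_{s/\tau(x)}*\psi_{\tau(x)}(x-y)\,dx\,ds.
\]

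To apply \eqref{eqn:HNS}, partition $\R^d$ by the dyadic level sets $E_j=\{x:\tau(x)\in[2^j,2^{j+1})\}$ and rescale $x\mapsto 2^{-j}x$ on each level to reduce to the case $\tau(x)\in[1,2]$; here the pieces $U_jg:=\int_{E_j}K_{\tau(x)}(x-\cdot)g(x)\,dx$ are Fourier-localized at scale $\sim 2^j$ (by the annular support of $m$), and can therefore be reassembled via the Littlewood--Paley square-function inequality, which in the range $1<p<2$ yields $\|T_\tau^*g\|_{L^p}^p\lesssim\sum_j\|U_jg\|_{L^p}^p$. Once $\tau(x)\in[1,2]$, the variable-dilation atom $\sigma_{s/\tau(x)}*\psi_{\tau(x)}$ is pointwise controlled by $\sigma_s*\Psi$ for a slightly enlarged bump $\Psi$ that still satisfies the hypotheses of Theorem \ref{thm:HNS}, while the small-$s$ contribution is handled by a routine Young-type estimate; setting $h(x,s)=g(x)\tilde\rho(s)$ turns the remaining expression into an instance of \eqref{eqn:HNS}, yielding the $L^p\to L^p$ bound with constant $\|K\|_{L^p}$. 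The Lorentz refinement to the sharp constant $\|K\|_{L^{p,q}}$ is then obtained by an atomic decomposition of $\tilde\rho$ along its level sets, exactly as in the upgrade of the $T_m$ bound in \cite{HNS}. I expect the principal technical obstacle to lie in the dyadic-pigeonhole-and-reassembly step: one must verify that the variable-dilation atoms are uniformly controlled by $\sigma_s*\Psi$ with $\Psi$ still valid for \eqref{eqn:HNS}, and that the Littlewood--Paley reassembly interacts correctly with the subsequent Lorentz atomic decomposition.
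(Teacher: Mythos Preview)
Your overall strategy---linearize the maximal operator, pass to the adjoint, reduce the dilation parameter to $[1,2]$ via Littlewood--Paley, and feed the result into \eqref{eqn:HNS}---is exactly the paper's. The lower bound is fine. But there is a genuine gap in the upper bound: the claim that, once $\tau(x)\in[1,2]$, the atom $\sigma_{s/\tau(x)}*\psi_{\tau(x)}$ is pointwise controlled by $\sigma_s*\Psi$ for some fixed compactly supported bump $\Psi$ is false. As $\tau(x)$ ranges over $[1,2]$ the radius $s/\tau(x)$ ranges over $[s/2,s]$; for $s\gg 1$ the function $\sigma_{s/2}*\psi_2$ is supported in an $O(1)$-neighborhood of $\{|z|=s/2\}$ while $\sigma_s*\Psi$ is supported in an $O(1)$-neighborhood of $\{|z|=s\}$, and these annuli are disjoint. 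No compactly supported $\Psi$ can absorb a radius discrepancy of order $s$, so the reduction to \eqref{eqn:HNS} as you describe it does not go through.

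The paper sidesteps this by two small but decisive changes. First, it uses the $L^1(I)$-valued form of the linearization rather than a measurable selector $\tau$, reducing matters to
\[
\norm{\int_I K_t*g_t\,dt}_{L^{p,q}}\ \lesssim\ \norm{K}_{L^{p,q}}\,\norm{\int_I|g_t|\,dt}_{L^p}.
\]
Second---and this is what eliminates the obstacle you flagged---it does \emph{not} dilate $\psi$. Since $\widehat{K_t}$ for every $t\in I$ is supported where $\hat\psi$ is bounded below, one writes $K_t*g_t=\psi*K_t*f_t$ with the \emph{fixed} $\psi$ and $f_t=\eta*g_t$. Taking the polar decomposition of $K_t$ itself (profile $\kappa_t$) rather than of a rescaled kernel then gives directly the HNS form
\[
\int_I \psi*K_t*f_t\,dt=\int_{\R^d}\int_0^\infty \psi*\sigma_r(\cdot-y)\,h(y,r)\,dr\,dy,\qquad h(y,r)=\int_I\kappa_t(r)f_t(y)\,dt,
\]
so no comparison of spheres of different radii is ever needed. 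One finishes with the Lorentz version \eqref{eqn:HNSv} of \eqref{eqn:HNS} and the one-line Minkowski estimate $\norm{h(y,\cdot)}_{L^{p,q}(r^{d-1}dr)}\lesssim\norm{K}_{L^{p,q}}\int_I|f_t(y)|\,dt$. Your argument can be repaired in the same way: with $\tau(x)\in[1,2]$ factor $K_{\tau(x)}=\psi*L_{\tau(x)}$ through the fixed $\psi$, obtaining $h(x,r)=g(x)\,\lambda_{\tau(x)}(r)$ already in the form required by \eqref{eqn:HNS}; the scaling identity and the pointwise domination step are then unnecessary.
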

We remark that there are other important applications of \eqref{eqn:HNS} including local smoothing estimates for the wave equation (see \cite{HNS,HNS2}). 

This paper is organized as follows. Lemma \ref{prop:MaxHNS} is proved in Section \ref{sec:MaxHNS}. In Section \ref{sec:hankel}, Theorem \ref{thm:radial} is formulated in a slightly general setting in terms of maximal Hankel multiplier transformations. Proof of Theorem \ref{thm:radial} shall be given in Section \ref{sec:kernel} and \ref{sec:proof}. In what follows, we shall frequently use the standard notation $\les$ instead of $\leq C$ for an absolute constant $C$.

\section{Proof of Lemma \ref{prop:MaxHNS}} \label{sec:MaxHNS}
We start with some standard reduction. By Littlewood-Paley theory, it is sufficient to consider the maximal operator $\tilde{M}_m$ where the sup is taken over $t\in I:=[1,2]$ using the fact that the exponent $p'$ is greater than $2$ (see Section 3 of \cite{LRS}). By duality, this amounts to prove 
\begin{equation}\label{eqn:golHNS}
\norm{\int_I K_t * g_t dt}_{L^{p,q}} \les \norm{K}_{L^{p,q}} \norm{\int_I |g_t| dt}_{L^p}.
\end{equation}

We argue as in Section 5 and 9 of \cite{HNS}. Since $m$ is radial, $K_t:=\ift [m(\cdot/t)]$ is also radial. Let $\kappa_t$ be the function such that $K_t(\cdot) = \kappa_t(|\cdot|)$. Fix a Schwartz function $\eta_0$ such that $\hat \eta_0$ is compactly supported in $\{\xi: \frac{1}{8} < |\xi| < 8 \}$ and $\hat \eta_0 (\xi) = 1$ for $\frac{1}{4}< |\xi|<4$. The function $\eta_0$ was chosen so that $\hat \eta_0 (\xi)  m(\xi/t) = m(\xi/t)$ for $t\in I$. Let $\eta$ be a Schwartz function defined by $\hat \eta = \hat{\eta_0} (\hat{\psi})^{-1}$. By our choice of $\eta_0$, we may write $K_t* g_t = \psi * K_t * f_t$, where $f_t = \eta*g_t$. Observe that
\begin{equation} \label{eqn:obsrt}
\begin{split}
\int_I \psi * K_t * f_t  dt &= \int_I \left[\int_{\R^d} \int_0^\infty \psi*\sigma_r(\cdot-y)  \kappa_t(r) f_t(y) dr dy \right]dt \\
&= \int_{\R^d} \int_0^\infty \psi*\sigma_r(\cdot-y) h(y,r) dr dy,
\end{split}
\end{equation}
where we take $h(y,r) = \int_I  \kappa_t(r) f_t(y) dt$.

Thus, we are already in the situation to apply \eqref{eqn:HNS}. However, we would like to have
\begin{equation} \label{eqn:HNSm}
\norm{ \int_{\R^d} \int_0^\infty \sigma_r*\psi(\cdot - y)h(y,r)  drdy }_{L^p} \les 
\norm{ h}_{L^p(\R^d\times [0,\infty) ; dyr^{d-1} dr)},
\end{equation}
where the $r$-integration is taken over $[0,\infty)$. This issue was circumvented in \cite{HNS} by splitting the kernel $K$. For the convenience of having \eqref{eqn:HNSm}, we shall show that \eqref{eqn:HNSm} follows from \eqref{eqn:HNS} by crude estimations. For this, it is enough to show that
\begin{equation} \label{eqn:lower}
\norm{ \int_{\R^d} \int_0^1 \sigma_r*\psi(\cdot - y) h(y,r)  drdy }_{L^p} \les
\norm{h}_{L^p( \R^d\times [0,1]; dyr^{d-1}dr)}.
\end{equation}
Let $h_r(y) = h(y,r)$. Then the left hand side of \eqref{eqn:lower} is bounded by
\begin{align*}
\norm{ \int_0^1 \sigma_r*\psi* h_r  dr}_{L^p} &\les  
\int_0^1  \norm{ \sigma_r*\psi* h_r }_{L^p} dr  \\
&\les \int_0^1  \norm{ \sigma_r*\psi}_{L^1} \norm{h_r }_{L^p} dr
\les \int_0^1 r^{d-1}  \norm{h_r }_{L^p} dr.
\end{align*} 
Finally, H\"{o}lder's inequality gives \eqref{eqn:lower}.

By real interpolation and a Lorentz space estimate, \eqref{eqn:HNSm} implies
\begin{equation}\label{eqn:HNSv}
\norm{ \int_{\R^d} \int_0^\infty \sigma_r*\psi(\cdot - y)h(y,r)  drdy }_{L^{p,q}} 
\les
\left(\int_{\R^d} \norm{h(y,\cdot)}_{L^{p,q}([0,\infty);r^{d-1}dr)}^p dy\right)^{1/p}
\end{equation} 
for $1<p<p_d$ and $p\leq q \leq \infty$. We refer the reader to Section 9 of \cite{HNS}.

Now we turn to \eqref{eqn:obsrt}. We have
\begin{equation} \label{eqn:crucial}
\norm{h(y,\cdot)}_{L^{p,q}([0,\infty);r^{d-1}dr)} \les \norm{K}_{L^{p,q}} \int_I  |f_t(y)| dt
\end{equation}
by Minkowski's inequality for the Banach space $L^{p,q}$. Thus,
\begin{equation*}
\norm{\int_I K_t * g_t dt}_{L^{p,q}} \les \norm{K}_{L^{p,q}} \norm{\int_I |f_t| dt}_{L^p} \les \norm{K}_{L^{p,q}}  \norm{\int_I |g_t| dt}_{L^p}
\end{equation*}
by \eqref{eqn:obsrt} and \eqref{eqn:HNSv} which finishes the proof of \eqref{eqn:golHNS}.

\section{Hankel multipliers} \label{sec:hankel}
We would like to formulate Theorem \ref{thm:radial} in a slightly more general context. For a real number $d\geq 1$, let $\h$ be the modified Hankel transform (Fourier-Bessel transform) acting on functions $f$ on $\R_+:=(0,\infty)$ defined by
\begin{equation*}
\h f(\rho) = \int_0^\infty B_d(\rho r) f(r) d\mu_d(r),
\end{equation*}
where $B_d(x) = x^{-\frac{d-2}{2}} J_{\frac{d-2}{2}} (x)$, $J_\alpha$ denotes the standard Bessel function of order $\alpha$, and $\mu_d$ is the measure on $\R_+$ given by $d\mu_d(r)=r^{d-1}dr$. 
If $F$ is a radial function on $\R^d$ given by $F(\cdot) = f(|\cdot|)$ for some function $f$, then $\ftd[F](\xi) = (2\pi)^d\h f(|\xi|)$ (see \cite{StWe}).

Let $m$ be a bounded function supported in $[\frac{1}{2},2]$. Consider the maximal operator $\M_m$ defined by 
\begin{equation*}
\M_m f(r) = \sup_{t>0} |\T_{m(\cdot/t)} f (r)|,
\end{equation*}
where $\T_m$ is the Hankel multiplier transformation defined by $\h[\T_mf] = m \h f$. When $d\in \N$, one may identify the Hankel multiplier transformation $\T_m$ with the Fourier multiplier transformation $T_{m(|\cdot|)}$ acting on radial functions. More precisely, we have $T_{m(|\cdot|)} F(x) = \T_m f (|x|)$ if $F(\cdot) = f(|\cdot|)$.

\begin{thm} \label{thm:radialhan}
Let $d>1$, $1<p<\frac{2d}{d+1}$, and $p \leq q \leq \infty$. Then we have
\begin{equation*}
\norm{\M_m}_{L^{p',q'}(\mu_d) \to L^{p'}(\mu_d)} \approx 
\norm{\h[m]}_{L^{p,q}(\mu_d)}.
\end{equation*}
\end{thm}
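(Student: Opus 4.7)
The plan is to prove the equivalence in both directions. The lower bound $\norm{\h[m]}_{L^{p,q}(\mu_d)} \les \norm{\M_m}_{L^{p',q'}(\mu_d)\to L^{p'}(\mu_d)}$ is essentially free: the pointwise inequality $|\T_m f(r)| \le \M_m f(r)$, obtained by taking $t=1$ in the defining supremum, promotes any maximal bound to a bound $\T_m : L^{p',q'}(\mu_d) \to L^{p'}(\mu_d)$; duality gives $\T_m : L^p(\mu_d) \to L^{p,q}(\mu_d)$, and the Garrig\'{o}s--Seeger characterization of Hankel multiplier transformations \cite{GaSe} then yields the desired estimate on $\h[m]$.

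For the upper bound I would follow the template of Section \ref{sec:MaxHNS}. A standard Littlewood--Paley argument (valid because $p' > 2$) reduces matters to the truncated maximal operator with $t$ restricted to $I := [1,2]$. Dualizing this reduction, the goal becomes
\[
\bnorm{\int_I K_t \con g_t\,dt}_{L^{p,q}(\mu_d)} \les \norm{\h[m]}_{L^{p,q}(\mu_d)} \bnorm{\int_I |g_t|\,dt}_{L^p(\mu_d)},
\]
where $\con$ is Hankel convolution and $K_t := \h[m(\cdot/t)]$. As in \eqref{eqn:obsrt}, I would insert a smooth radial cutoff $\eta$ whose Hankel transform equals $1$ on the support of $m(\cdot/t)$ for $t\in I$, so that $g_t$ may be replaced by $f_t := \eta \con g_t$ without loss; the inner Hankel convolution is then rewritten through the integral representation of $B_d$. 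This produces a structural decomposition analogous to \eqref{eqn:obsrt} in which the role of $\sigma_r*\psi$ is played by a Bessel-type kernel $G_r$ on $\R_+$.

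The main technical obstacle, and the content of Section \ref{sec:kernel}, is to establish the Hankel analogue of \eqref{eqn:HNSm}, namely an inequality of the form
\[
\bnorm{\int_0^\infty G_r\, h(\cdot,r)\,dr}_{L^p(\mu_d)} \les \bnorm{h}_{L^p(\mu_d \otimes r^{d-1}dr)}
\]
for $1 < p < \tfrac{2d}{d+1}$. This range matches the Garrig\'{o}s--Seeger radial characterization, and the inequality should follow from their kernel decomposition together with the standard asymptotics of $J_{(d-2)/2}$, which are available for every real order greater than $-1/2$; hence the whole argument goes through for real $d > 1$ without ever invoking spherical-measure geometry. Once this kernel inequality is in hand, the proof is completed in Section \ref{sec:proof} by a real-interpolation passage to the Lorentz norm and Minkowski's inequality for the Banach space $L^{p,q}(\mu_d)$, exactly as in \eqref{eqn:HNSv}--\eqref{eqn:crucial}.
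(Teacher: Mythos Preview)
Your lower bound and the initial reductions (Littlewood--Paley to localize $t\in I$, then dualize) are correct and match the paper. The divergence is in what you call ``the content of Section~\ref{sec:kernel}.'' The paper does \emph{not} prove a Hankel analogue of the HNS inequality \eqref{eqn:HNSm}; there is no abstract $L^p$ inequality for a universal kernel $G_r$ here. Instead, the paper writes the bilinear form as $\int_0^\infty K(r,s)[f(s)]\,d\mu_d(s)$ with $K(r,s)$ an operator on $L^1(I)$, and then invokes the \emph{pointwise} kernel bound of Garrig\'os--Seeger (Proposition~3.1 in \cite{GaSe}):
\[
|K(r,s)[f(s)]| \lesssim \sum_{\pm,\pm}\frac{W[f(s)](\pm r\pm s)}{[(1+r)(1+s)]^{(d-1)/2}},
\]
where $W[f(s)]=|\K[f(s)]|*w_N$ is built from the \emph{one-dimensional} Fourier transform of $m$. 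Section~\ref{sec:proof} is then not ``interpolation plus Minkowski exactly as in \eqref{eqn:HNSv}--\eqref{eqn:crucial}''; it is a three-region decomposition $E+\sum_m S_m+H$ according to whether $s\gg r$, $s\sim r$, or $s\ll r$, with each piece estimated separately (using \eqref{eqn:W}, \eqref{eqn:WL1}, \eqref{eqn:sigma} and a further interpolation for $S_m$ and $E$).

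Your proposed route --- an inequality $\bigl\|\int_0^\infty G_r\,h(\cdot,r)\,dr\bigr\|_{L^p(\mu_d)}\lesssim \|h\|_{L^p}$ for some unspecified Bessel-type $G_r$ --- is not well-posed as written: you have not said what $G_r$ is (multiplication? Hankel convolution?) nor how it arises from \eqref{eqn:obsrt} in the Hankel setting, where there is no obvious replacement for the polar identity $K_t=\int\kappa_t(r)\,d\sigma_r$ when $d\notin\mathbb{N}$. The assertion that this ``should follow from their kernel decomposition together with the standard asymptotics of $J_{(d-2)/2}$'' is exactly the nontrivial content of Sections~\ref{sec:kernel}--\ref{sec:proof}, and the paper carries it out via the pointwise bound and the $E/S_m/H$ splitting rather than via an HNS-type operator inequality. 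As stated, your proposal has a genuine gap at this step.
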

Theorem \ref{thm:radial} is a special case of Theorem \ref{thm:radialhan}, since for $d\in \N$
\begin{equation*}
K(x) = \ft_{\R^d}^{-1}[m(|\cdot|)](x) = c_d \h m(|x|).
\end{equation*}

Let $\mt$ be the measure on $\R$ given by $\mt (x) = (1+|x|)^{d-1}dx$. The estimate
\begin{equation}\label{eqn:fre}
\norm{\M_m}_{L^{p',q'}(\mu_d) \to L^{p'}(\mu_d)} \les A(p,q) := \norm{(1+|\cdot|)^{-\frac{d-1}{2}} \ft_{\R}^{-1}[m] }_{L^{p,q}(\mt)}
\end{equation}
implies Theorem \ref{thm:radialhan}, since
\begin{equation*}
A(p,q) \les \norm{\h[m]}_{L^{p,q}(\mu_d)} \les \norm{\T_m}_{L^{p}(\mu_d) \to L^{p,q}(\mu_d)} \les \norm{\M_m}_{L^{p',q'}(\mu_d) \to L^{p'}(\mu_d)}.
\end{equation*}
See \cite{GaSe} for the first two inequalities. The last inequality holds by duality.

Let $I=[1,2]$. By an argument similar to the one in Section \ref{sec:MaxHNS}, \eqref{eqn:fre} is equivalent to the dual statement 
\begin{equation}\label{eqn:golrad}
\norm{\int_I \T_{m(\cdot/t)} f_t dt}_{L^{p,q}(\mu_d)} \les A(p,q) \norm{\int_I |f_t| dt}_{L^p(\mu_d)}.
\end{equation}
The rest of the paper is devoted to the proof of \eqref{eqn:golrad}.

\section{Kernel estimate} \label{sec:kernel}
Let $K(r,s)$ be a bounded linear operator from $L^1(I)$ to $\C$ given by
\begin{equation*}
K(r,s)[g] = \int_0^\infty \int_I m(\rho/t) g(t) dt B_d(r\rho) B_d(s\rho) d\md(\rho)
\end{equation*}
for $g\in L^1(I)$. Then one may write
\begin{equation*}
\int_I \T_{m(\cdot/t)} [f_t](r) dt = \int_0^\infty K(r,s)[f(s)] \mds.
\end{equation*}
Here, we regard $f$ as a function on $\R_+$ taking values in $B:=L^1(I)$. 

Let $\kappa_t = \ft_{\R}^{-1}[m(\cdot/t)]$ and $\K [f(s)](x) = \int_I  f_t(s) \kappa_t(x)dt$. Then we have
\begin{equation*}
K(r,s)[f(s)] = \int_0^\infty \ft_{\R}[\K[f(s)]](\rho) \eta(\rho) B_d(r\rho) B_d(s\rho) \mds,
\end{equation*}
where $\eta$ is a smooth function supported in $[\frac{1}{8},8]$ such that $\eta(\cdot)m(\cdot/t) = m(\cdot/t)$ for all $t\in I$. Next, we use the kernel estimate, Proposition 3.1 in \cite{GaSe}, which implies that 
\begin{equation} \label{eqn:kernel}
|K(r,s)[f(s)]| \les_N \sum_{\pm,\pm} \frac{W[f(s)](\pm r \pm s)}{[(1+r)(1+s)]^\frac{d-1}{2}},
\end{equation}
where $W[f(s)](x) = |\K[f(s)]| * w_N(x)$ for $w_N(x) = (1+|x|)^{-N}$ for $N>1$.

We have the following analogue of \eqref{eqn:crucial}.
\begin{equation} \label{eqn:W}
\begin{split}
\norm{\frac{W[f(s)]}{(1+|\cdot|)^{\frac{d-1}{2}}} }_{L^{p,q}(\mt)} &\les \norm{\frac{\K[f(s)]}{(1+|\cdot|)^{\frac{d-1}{2}}} }_{L^{p,q}(\mt)} \les A(p,q) |f(s)|_B.
\end{split}
\end{equation}
See Lemma 2.3 in \cite{GaSe} for the first inequality. 

Furthermore, by a Lorentz space version of H\"{o}lder's inequality (see \cite{Oneil})
\begin{equation} \label{eqn:WL1}
\begin{split}
\norm{W[f(s)]}_{L^1(\R)} &\les \norm{\K[f(s)]}_{L^1(\R)} \\
&= \int (1+|x|)^{-\frac{d-1}{2}} (1+|x|)^{-\frac{d-1}{2}} |\K[f(s)](x)| d\mt(x) \\
&\les \norm{(1+|\cdot|)^{-\frac{d-1}{2}}}_{L^{p',1}(\mt) }  \norm{\frac{\K[f(s)]}{(1+|\cdot|)^{\frac{d-1}{2}}} }_{L^{p,\infty}(\mt)} \\
&\les A(p,\infty) |f(s)|_B.
\end{split}
\end{equation}
We remark that $A(p,\infty) \leq A(p,q)$. For the last inequality, $p'>\frac{2d}{d-1}$ is used.

In addition, we have 
\begin{align*}
\norm{\frac{W[f(s)]}{(1+|\cdot|)^{\frac{d-1}{2}}}}_{L^\infty} \les \norm{\K[f(s)]}_{L^\infty} &= \norm{\K[f(s)]*\ift[\eta]}_{L^\infty} \\ &\les \norm{\K[f(s)]}_{L^1(\R)} \les A(p,\infty) |f(s)|_B
\end{align*}
by \eqref{eqn:WL1}. Interpolating this estimate and \eqref{eqn:W} (with $q=\infty$) gives that for $\sigma>p$,
\begin{equation} \label{eqn:sigma}
\norm{\frac{W[f(s)]}{(1+|\cdot|)^{\frac{d-1}{2}}} }_{L^{\sigma}(\mt)} \les A(p,\infty) |f(s)|_B.
\end{equation}

\section{Proof of \eqref{eqn:golrad}} \label{sec:proof}
\subsection{Decomposition}
Let $I_m=[2^m,2^{m+1})$ and $I_m^*=[2^{m-2},2^{m+3})$.
As in \cite{GaSe}, we decompose 
\begin{equation}
\begin{split}
&\int_0^\infty K(r,s)[f(s)] \mds \\
&=\sum_m [\chi_{(0,2^{m-2})}(r) +  \chi_{I_m^*}(r)+ \chi_{[2^{m+3},\infty)}(r)] \int_{I_m} K(r,s)[f(s)] \mds.
\end{split}
\end{equation}

Then by the kernel estimate \eqref{eqn:kernel},
$$\abs{\int_0^\infty K(r,s)[f(s)] \mds} \les Ef(r) + \sum_m S_m f(r)+ Hf(r),$$
where 
\begin{align*}
E f (r) &= \sum_{\pm,\pm}
\int_{4r}^\infty \frac{W[f(s)](\pm r\pm s)}{[(1+r)(1+s)]^\frac{d-1}{2}} \mds,\\
S_m f(r)&=\chi_{I_m^*}(r) \sum_{\pm,\pm}\int_{I_m} \frac{W[f(s)](\pm r\pm s)}{[(1+r)(1+s)]^\frac{d-1}{2}} \mds,\\
H f(r) &= \sum_{\pm,\pm} \int_{0}^{r/4} \frac{W[f(s)](\pm r\pm s)}{[(1+r)(1+s)]^\frac{d-1}{2}} \mds.
\end{align*}

In the following sections, we shall omit the summation notation $\sum_{\pm,\pm}$. We shall prove that the operators $E$, $\sum_m S_m$, and $H$ are bounded from $L^{p,q}(\mu_d,B)$ to $L^{p,q}(\mu_d)$, thus actually proving an estimate shaper than \eqref{eqn:golrad}.

\subsection{Estimation of $Hf$}
\begin{prop} Let $d>1$, $1<p<\frac{2d}{d+1}$, and $1\leq q \leq \infty$. Then 
\begin{equation*}
\norm{Hf}_{L^{p,q}(\mu_d)} \les A(p,q) \norm{f}_{L^{p,\infty}(\mu_d,B)}.
\end{equation*}
\end{prop}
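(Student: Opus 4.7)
The plan is to exploit the fact that on the support of integration $s<r/4$, the quantities $\pm r\pm s$ are all comparable to $r$ in absolute value, then apply Minkowski's integral inequality to peel the $s$-integral off the $L^{p,q}$-norm, and finally close with the Lorentz Hölder inequality of O'Neil.

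For $0<s<r/4$, one has $|\pm r\pm s|\in[3r/4,5r/4]$, so $(1+r)^{(d-1)/2}\approx (1+|\pm r\pm s|)^{(d-1)/2}$. Hence
\begin{equation*}
Hf(r)\les \int_0^{r/4}\frac{1}{(1+s)^{(d-1)/2}}\cdot\frac{W[f(s)](\pm r\pm s)}{(1+|\pm r\pm s|)^{(d-1)/2}}\mds.
\end{equation*}
Since $L^{p,q}(\mu_d)$ is a Banach function space (up to an equivalent norm) for $1<p<\infty$, $1\leq q\leq\infty$, Minkowski's integral inequality yields
\begin{equation*}
\norm{Hf}_{L^{p,q}(\mu_d)}\les \int_0^\infty \frac{1}{(1+s)^{(d-1)/2}}\bnorm{\chi_{\{r>4s\}}(r)\frac{W[f(s)](\pm r\pm s)}{(1+|\pm r\pm s|)^{(d-1)/2}}}_{L^{p,q}(\mu_d;dr)}\mds.
\end{equation*}

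To bound the inner norm, change variables $u=\pm r\pm s$. On the region $r>4s$ one has $|u|\approx r$, so $d\mu_d(r)=r^{d-1}dr\les (1+|u|)^{d-1}du=d\mt(u)$ (uniformly in all four sign choices and for both $s\leq 1$ and $s>1$). Consequently, the inner norm is majorized by $\norm{W[f(s)]/(1+|\cdot|)^{(d-1)/2}}_{L^{p,q}(\mt)}$, which by \eqref{eqn:W} is $\les A(p,q)|f(s)|_B$. Substituting,
\begin{equation*}
\norm{Hf}_{L^{p,q}(\mu_d)}\les A(p,q)\int_0^\infty \frac{|f(s)|_B}{(1+s)^{(d-1)/2}}\mds.
\end{equation*}

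To finish, I would apply O'Neil's Lorentz Hölder inequality with respect to the measure $\mu_d$:
\begin{equation*}
\int_0^\infty \frac{|f(s)|_B}{(1+s)^{(d-1)/2}}\mds\les \norm{(1+\cdot)^{-(d-1)/2}}_{L^{p',1}(\mu_d)}\norm{f}_{L^{p,\infty}(\mu_d,B)},
\end{equation*}
after verifying by direct computation of the decreasing rearrangement that the weight $(1+s)^{-(d-1)/2}$ belongs to $L^{p',1}(\mu_d)$ precisely when $p'>\frac{2d}{d-1}$, i.e., under the hypothesis $p<\frac{2d}{d+1}$. The main obstacle is the bookkeeping for the change of variables so that the measure comparison $d\mu_d(r)\les d\mt(u)$ is uniform in all four sign combinations on the region $r>4s$; the applicability of Minkowski to $L^{p,q}$ and the computation of the Lorentz norm of the weight are routine once that comparison is in hand.
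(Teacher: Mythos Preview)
Your proposal is correct and follows essentially the same route as the paper's own proof: Minkowski in $L^{p,q}(\mu_d)$ to pull the $s$-integral outside, a change of variable $u=\pm r\pm s$ on the region $r>4s$ to pass from $d\mu_d(r)$ to $d\mt(u)$, then invoke \eqref{eqn:W} to get $A(p,q)|f(s)|_B$, and finally O'Neil's Lorentz H\"older inequality together with $(1+\cdot)^{-(d-1)/2}\in L^{p',1}(\mu_d)$ for $p'>\tfrac{2d}{d-1}$. The only cosmetic difference is that you replace $(1+r)^{(d-1)/2}$ by $(1+|\pm r\pm s|)^{(d-1)/2}$ before applying Minkowski, whereas the paper makes this comparison implicitly during the change of variable; the two orderings are equivalent.
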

\begin{proof}
By Minkowski's inequality,
\begin{equation} \label{eqn:hf}
\norm{Hf}_{L^{p,q}(\mu_d)} \leq \int_0^\infty \norm{ \frac{\chi_{(4s,\infty)} W[f(s)](\pm \cdot \pm s)}{(1+\cdot)^\frac{d-1}{2}}}_{L^{p,q}(\mu_d)} \frac{\mds}{(1+s)^\frac{d-1}{2}}.
\end{equation}
By the change of variable $r \to r \pm s$ and the fact that $r\geq 4s$, the norm inside of the integral is bounded by
\begin{equation*}
\norm{\frac{\chi_{(3s,\infty)} W[f(s)](\pm \cdot)}{(1+\cdot)^\frac{d-1}{2}}}_{L^{p,q}(\mu_d)} 
\les \norm{\frac{W[f(s)]}{(1+|\cdot|)^\frac{d-1}{2}}}_{L^{p,q}(\tilde{\mu_d})} 
\les A(p,q) |f(s)|_B,
\end{equation*}
by \eqref{eqn:W}. Thus \eqref{eqn:hf} is bounded by
\begin{align*}
A(p,q) \int_0^\infty |f(s)|_B \frac{\mds}{(1+s)^\frac{d-1}{2}} &\leq A(p,q) \norm{f}_{L^{p,\infty}(\mu_d,B)} \norm{(1+\cdot)^{-\frac{d-1}{2}}}_{L^{p',1}(\mu_d)} \\
&\les A(p,q) \norm{f}_{L^{p,\infty}(\mu_d,B)}.
\end{align*}
For the last inequality, the condition $p' > \frac{2d}{d-1}$ was used.
\end{proof}

\subsection{Estimation of $S_mf$}
\begin{prop} \label{prop:sm} Let $d>1$, $1<p<\frac{2d}{d+1}$, and $1\leq q\leq \infty$. Then 
\begin{equation*}
\bnorm{\sum_m S_mf }_{L^{p,q}(\mu_d)} \les A(p,\infty) \norm{f}_{L^{p,q}(\mu_d,B)}.
\end{equation*}
\end{prop}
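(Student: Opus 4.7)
The plan is to exploit the sharp pointwise decay of the kernel on $\{r\sim s\sim 2^m\}$, reduce $S_m f$ to a convolution-type average against $Y(s):=W[f(s)]/(1+|\cdot|)^{(d-1)/2}$ (already controlled by \eqref{eqn:W} and \eqref{eqn:sigma}), and sum over $m$ using bounded overlap of $\{I_m^*\}$. Since $r\in I_m^*$ and $s\in I_m$ both have size $\sim 2^m$, the refined pointwise bound
\[
\frac{(1+|\pm r\pm s|)^{(d-1)/2}}{[(1+r)(1+s)]^{(d-1)/2}} \les (1+2^m)^{-(d-1)/2}
\]
holds, so that $|S_m f(r)| \les (1+2^m)^{-(d-1)/2}\,\chi_{I_m^*}(r)\int_{I_m} Y(s)(\pm r\pm s)\,\mds$.

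Next I would estimate $\norm{S_m f}_{L^{p,q}(\md)}$ via Minkowski's integral inequality in $s$. For the inner norm $\norm{\chi_{I_m^*}(r)\,Y(s)(\pm r\pm s)}_{L^{p,q}(\md(r))}$, change variables $u=\pm r\pm s$ and compare the three measures $\md\sim 2^{m(d-1)}dr$ on $I_m^*$, $\mt\sim 2^{m(d-1)}du$ on the image interval $J$ of length $\sim 2^m$, and plain $du$; the $2^{m(d-1)/p}$ factors cancel, collapsing the inner norm to $\norm{\chi_J Y(s)}_{L^{p,q}(\mt)}$. A direct application of \eqref{eqn:W} here yields $A(p,q)\,|f(s)|_B$, but to produce the sharper constant $A(p,\infty)$ claimed, I would instead invoke Hölder's inequality in Lorentz spaces on the finite-$\mt$-measure set $J$: for $\sigma>p$ sufficiently close to $p$,
\[
\norm{\chi_J Y(s)}_{L^{p,q}(\mt)} \les \mt(J)^{1/p-1/\sigma}\,\norm{Y(s)}_{L^\sigma(\mt)} \les 2^{m\varepsilon}\,A(p,\infty)\,|f(s)|_B
\]
via \eqref{eqn:sigma}, where $\varepsilon=\varepsilon(\sigma,p,d)>0$ is small.

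Combining the pointwise factor $(1+2^m)^{-(d-1)/2}$ with this inner bound, integrating in $s\in I_m$, and summing over $m$ using the bounded overlap of $\{I_m^*\}$ (so the triangle inequality in $L^{p,q}$ loses only a constant), I would arrive at
\[
\bnorm{\sum_m S_m f}_{L^{p,q}(\md)} \les A(p,\infty)\int_0^\infty (1+s)^{-(d-1)/2+\varepsilon}\,|f(s)|_B\,\mds.
\]
Lorentz Hölder against $\norm{(1+\cdot)^{-(d-1)/2+\varepsilon}}_{L^{p',q'}(\md)}$ then finishes the proof; the weight is integrable precisely because the hypothesis $p<2d/(d+1)$ is equivalent to the strict inequality $p'>2d/(d-1)$, so $\varepsilon$ may be chosen small enough to preserve integrability.

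The main obstacle is obtaining the sharper constant $A(p,\infty)$ rather than $A(p,q)$: this forces the use of the $L^\sigma$-version \eqref{eqn:sigma} in place of \eqref{eqn:W}, and the resulting $2^{m\varepsilon}$-loss must then be absorbed by the slack in the hypothesis on $p$. The remaining technical work is routine bookkeeping of the $2^m$-powers produced by the three measure comparisons and the Hölder inequality, together with the symmetric treatment of $m<0$, which is strictly easier because the weight factors are harmless on $(0,1]$.
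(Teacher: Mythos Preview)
There is a genuine gap in the measure-comparison step. Your assertion that ``$\mt\sim 2^{m(d-1)}du$ on the image interval $J$'' is correct for the sign choices $u=\pm(r+s)$, where indeed $|u|\sim 2^m$ throughout $J$; but for $u=\pm(r-s)$ with $m\ge 0$ the interval $J=I_m^*\mp s$ contains a neighbourhood of the origin, so the weight $(1+|u|)^{d-1}$ ranges from $1$ up to $(1+2^m)^{d-1}$ on $J$. The pushforward of $\mu_d$ under $r\mapsto \pm r\mp s$ is $\sim 2^{m(d-1)}\,du$ on all of $J$ (because $r\in I_m^*$), hence it \emph{dominates} $\mt$ there rather than being comparable to it, and the inequality you need goes the wrong way. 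If instead you stay with Lebesgue measure on $J$ and combine H\"older with $\norm{Y(s)}_{L^\sigma(du)}\le\norm{Y(s)}_{L^\sigma(\mt)}$ and \eqref{eqn:sigma}, the inner norm becomes $\lesssim 2^{m(d-1)/p}\cdot 2^{m(1/p-1/\sigma)}A(p,\infty)\,|f(s)|_B$ rather than $2^{m\varepsilon}A(p,\infty)\,|f(s)|_B$. Carrying this through, the coefficient of $\int_{I_m}|f(s)|_B\,\mds$ is, for $m\ge 0$, of order $2^{m(d-1)(1/p-1/2)}$, a \emph{positive} power of $2^m$; neither the $\ell^1$-triangle inequality nor the $\ell^p$-gain from bounded overlap is enough to sum this, and the final Lorentz--H\"older step would require the divergent norm $\norm{(1+\cdot)^{(d-1)(1/p-1/2)}}_{L^{p',q'}(\mu_d)}$.

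The paper circumvents this by proving strong-type $L^\sigma(\mu_d,B)\to L^\sigma(\mu_d)$ bounds for $\sum_m S_m$ at $\sigma=1$ and for every $p<\sigma<\tfrac{2d}{d+1}$ (Lemma~\ref{lem:sm}), and then deducing the $L^{p,q}$ statement by real interpolation. The crucial manoeuvre in the $L^\sigma$ proof is to change variables in the \emph{$s$-integral} first, so that the argument of $W$ becomes a clean $\pm s$; after Minkowski the inner norm is in the spatial variable of $f$ and the outer integral is in $s$. One then swaps the order via Fubini ($\sigma=1$) or H\"older in the outer $s$-variable ($\sigma>p$), which manufactures exactly the weight $(1+|s|)^{d-1}$ and hence the norm $\norm{W[f(r)]/(1+|\cdot|)^{(d-1)/2}}_{L^\sigma(\mt)}$ controlled by \eqref{eqn:WL1} and \eqref{eqn:sigma}. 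The resulting bound is uniform in $m$, so bounded overlap of the $I_m^*$ suffices for the summation.
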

Proposition \ref{prop:sm} is a consequence of the following lemma by real interpolation.
\begin{lem} \label{lem:sm} Let $d$ and $p$ as in Proposition \ref{prop:sm}. Then 
\begin{equation*}
\bnorm{\sum_m S_mf }_{L^{\sigma}(\mu_d)} \les A(p,\infty) \norm{f}_{L^\sigma(\mu_d,B)}
\end{equation*}
for $\sigma=1$ and $p<\sigma<\frac{2d}{d+1}$.
\end{lem}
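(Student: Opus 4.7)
The plan is to establish the uniform single-scale estimate
\begin{equation*}
\norm{S_m f}_{L^\sigma(\mu_d)} \les A(p,\infty) \norm{f\chi_{I_m}}_{L^\sigma(\mu_d,B)}
\end{equation*}
and then to sum: since the sets $\{I_m^*\}$ have bounded overlap, $\norm{\sum_m S_m f}_{L^\sigma(\mu_d)}^\sigma \les \sum_m \norm{S_m f}_{L^\sigma(\mu_d)}^\sigma$, and combining with the identity $\norm{f}_{L^\sigma(\mu_d,B)}^\sigma = \sum_m \norm{f\chi_{I_m}}_{L^\sigma(\mu_d,B)}^\sigma$ closes the argument.

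For $\sigma=1$, this single-scale bound follows immediately from Fubini and the $L^1$-estimate \eqref{eqn:WL1}: after exchanging the $r$- and $s$-integrals and substituting $u = \pm r \pm s$, the $r$-integral reduces to $\norm{W[f(s)]}_{L^1(\R)} \les A(p,\infty)|f(s)|_B$ times the weight $r^{d-1}/(1+r)^{(d-1)/2}$, and pairing this with $s^{d-1}/(1+s)^{(d-1)/2}$ reconstructs the measure $\mu_d$.

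For $p < \sigma < \tfrac{2d}{d+1}$, I treat the four sign configurations in \eqref{eqn:kernel} separately. In the \emph{same-sign cases} $\pm(r+s)$ the argument has magnitude $\approx 1+2^m$, and the weighted estimate \eqref{eqn:sigma} restricted to this annulus gives
\begin{equation*}
\biggl(\int_{|u|\approx 1+2^m} |W[f(s)](u)|^\sigma du\biggr)^{1/\sigma} \les (1+2^m)^{-(d-1)(1/\sigma - 1/2)} A(p,\infty)|f(s)|_B.
\end{equation*}
Combining Minkowski in $s$, H\"older from $\norm{f}_{L^1(I_m,ds,B)}$ to $\norm{f}_{L^\sigma(I_m,ds,B)}$, and the Lebesgue-to-$\mu_d$ conversion yields the overall factor $2^{m[(d+1)/2 - d/\sigma]}$, which is $\leq 1$ precisely when $\sigma \leq \tfrac{2d}{d+1}$ (for $m\geq 0$; for $m<0$ a direct estimate produces a factor $2^{m[d-1/\sigma]} \leq 1$). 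In the \emph{mixed-sign cases} $\pm(r-s)$ the argument can vanish, and the weighted estimate is ineffective. Instead I use the pointwise majorization $|W[f(s)](u)| \leq |f(s)|_B \tilde W(u)$, where $\tilde W := \tilde\kappa \ast w_N$ and $\tilde\kappa(x) := \sup_{t\in I} |\kappa_t(x)|$, which converts the $s$-integral into a genuine convolution $(|f|_B \chi_{I_m}) \ast \tilde W$ on $\R$. Young's inequality then gives
\begin{equation*}
\norm{S_m^{\pm(r-s)} f}_{L^\sigma(\R, dr)} \les \norm{|f|_B \chi_{I_m}}_{L^\sigma(\R, ds)} \norm{\tilde W}_{L^1},
\end{equation*}
and the $2^{m(d-1)/\sigma}$-factors gained/lost in passing between Lebesgue and $\mu_d$-norms cancel, reducing the matter to showing $\norm{\tilde W}_{L^1} \les A(p,\infty)$.

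This last $L^1$-bound I would prove by adapting \eqref{eqn:WL1}: since $\norm{\tilde W}_{L^1} \leq \norm{\tilde\kappa}_{L^1} \norm{w_N}_{L^1}$, Hölder in Lorentz spaces against $(1+|\cdot|)^{-(d-1)/2} \in L^{p',1}(\mt)$ (finite since $p'>\tfrac{2d}{d-1}$) reduces matters to $\norm{\tilde\kappa/(1+|\cdot|)^{(d-1)/2}}_{L^{p,\infty}(\mt)} \les A(p,\infty)$. Using the scaling $\kappa_t(x) = t\kappa_1(tx)$ and the boundedness of $t \in I$, the latter follows from the boundedness of the Hardy--Littlewood maximal operator on $L^{p,\infty}(\mt)$ (valid as $\mt$ is doubling) applied to the function $\kappa_1/(1+|\cdot|)^{(d-1)/2}$, whose $L^{p,\infty}(\mt)$ norm is by definition $A(p,\infty)$. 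I expect the mixed-sign case to be the chief obstacle: passing to a translation-invariant convolution via $\tilde W$ is essential to avoid the fatal $2^{m(1-1/\sigma)}$ loss that would arise from naive Minkowski in $s$, and the indirect route to $\norm{\tilde W}_{L^1}$ through a maximal-function estimate is what makes the Young step go through.
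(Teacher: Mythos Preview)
Your reduction to the single-scale estimate and your $\sigma=1$ argument are fine and match the paper. The problem is your treatment of the mixed-sign terms for $p<\sigma<\frac{2d}{d+1}$.

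The step $\norm{\tilde\kappa/(1+|\cdot|)^{(d-1)/2}}_{L^{p,\infty}(\mt)}\les A(p,\infty)$ via the Hardy--Littlewood maximal function is incorrect: the Hardy--Littlewood operator controls \emph{averages} of $\kappa_1/(1+|\cdot|)^{(d-1)/2}$, not the pointwise dilation supremum $\sup_{t\in I}|\kappa_1(tx)|$. In fact the inequality $\norm{\tilde\kappa}_{L^1}\les A(p,\infty)$ can fail. Take $m(\xi)=e^{-iR\xi}\chi(\xi)$ for a smooth bump $\chi$ and $R\gg 1$, so that $\kappa_1$ is essentially a unit bump centered at $R$. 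Then $\tilde\kappa(x)\approx 1$ on $[R/2,R]$, giving $\norm{\tilde\kappa}_{L^1}\approx R$, while $A(p,\infty)\approx R^{(d-1)(1/p-1/2)}$; the former dominates whenever $p>\frac{2(d-1)}{d+1}$, which lies strictly inside the range $p<\frac{2d}{d+1}$. So the Young-inequality route through $\tilde W$ cannot close with only $A(p,\infty)$ on the right. (It would work under the stronger hypothesis \eqref{eqn:conGSM}, but the whole point here is to avoid that.)

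The paper sidesteps the sign dichotomy altogether. After the pointwise bound $S_mf(r)\les \chi_{I_m^*}(r)\int W[\chi_{I_m}f(s)](\pm r\pm s)\,ds$, one substitutes in $s$ so that the argument of $W$ becomes $\pm s$ and the argument of $f$ becomes $s\pm r$; Minkowski in $r$ followed by a translation in $r$ yields
\[
\norm{S_mf}_{L^\sigma(\mu_d)}\les 2^{m(d-1)/\sigma}\int_\R \norm{W[\chi_{I_m}f(\cdot)](s)}_{L^\sigma(\R)}\,ds .
\]
Now one inserts the weight $(1+|s|)^{-(d-1)(1/\sigma-1/2)}$, which lies in $L^{\sigma'}(\R)$ exactly when $\sigma<\frac{2d}{d+1}$, and applies H\"older in $s$ to interchange the order of integration; the resulting expression is controlled directly by \eqref{eqn:sigma}. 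This handles all four sign configurations uniformly and never requires control of $\sup_{t\in I}|\kappa_t|$.
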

\begin{proof}
Considering the support of $S_m f$, it is enough to show that
\begin{equation}\label{eqn:goalsm}
\norm{S_mf }_{L^{\sigma}(\mu_d)} \les A(p,\infty) \norm{\chi_{I_m}f}_{L^\sigma(\mu_d,B)}.
\end{equation}

Since $r\sim s$, $S_m f(r)$ is bounded by
\begin{equation*}
\chi_{I_m^*}(r)\int W[\chi_{I_m}f(s)](\pm r\pm s) ds
= \chi_{I_m^*}(r)\int W[\chi_{I_m}f(s\pm r )](\pm s) ds.
\end{equation*}

By Minkowski's inequality,
\begin{equation}\label{eqn:sm1}
\norm{S_m f}_{L^\sigma(\mu_d)} \les 2^{m(d-1)/\sigma} \int \norm{W[\chi_{I_m} f(\cdot)](s)}_{L^\sigma(\R)} ds,
\end{equation}
after a change of variable in the $r$-variable. 

Although we have used Minkowski's inequality for the change of variable, we should integrate in the $s$-variable first in order to apply the estimates from Section \ref{sec:kernel}. For the case $\sigma=1$, we apply Fubini's theorem and \eqref{eqn:WL1} to bound \eqref{eqn:sm1} by a constant times
\begin{equation*}
A(p,\infty) \int_{I_m} 2^{m(d-1)} |f(r)|_B dr \les A(p,\infty) \norm{\chi_{I_m} f}_{L^1(\mu_d,B)},
\end{equation*}
which gives \eqref{eqn:goalsm}.

For the case $p<\sigma<\frac{2d}{d+1}$, we apply H\"{o}lder's inequality to interchange the order of the integration. Using $\norm{(1+|\cdot|)^{-(d-1)(\frac{1}{\sigma}-\frac{1}{2})}}_{L^{\sigma'}(\R)} < \infty$ and \eqref{eqn:sigma}, \eqref{eqn:sm1} is bounded by
\begin{align*}
C\left(2^{m(d-1)} \int \norm{ \frac{W[\chi_{I_m} f(r)]}{(1+|\cdot|)^{(d-1)/2} }}_{L^\sigma(\tilde{\mu}_d)}^\sigma dr\right)^{1/\sigma} \les A(p,\infty) \norm{\chi_{I_m}f}_{L^\sigma(\mu_d,B)}.
\end{align*}
\end{proof}

\subsection{Estimation of $Ef$}
\begin{prop} \label{prop:em} Let $d>1$, $1<p<\frac{2d}{d+1}$, and $1\leq q\leq \infty$. Then 
\begin{equation*}
\norm{E f }_{L^{p,q}(\mu_d)} \les A(p,\infty) \norm{f}_{L^{p,q}(\mu_d,B)}.
\end{equation*}
\end{prop}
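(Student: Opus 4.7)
The strategy mirrors that of Proposition~\ref{prop:sm}: I would establish two strong-type estimates $\norm{Ef}_{L^\sigma(\mu_d)} \les A(p, \infty) \norm{f}_{L^\sigma(\mu_d, B)}$, one at $\sigma = 1$ and one at some $\sigma \in (p, \frac{2d}{d+1})$, and then deduce the $L^{p,q}$ bound by real interpolation (for the given $p$ one can choose $\sigma$ so that $1 < p < \sigma < \frac{2d}{d+1}$, placing $p$ strictly between the two interpolation endpoints).

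For the $L^\sigma$ estimate, I would apply Minkowski's inequality to bring the $s$-integration outside the $L^\sigma(\mu_d, dr)$ norm:
\begin{equation*}
\norm{Ef}_{L^\sigma(\mu_d)} \leq \int_0^\infty \norm{\frac{\chi_{(0, s/4)}(r) W[f(s)](\pm r \pm s)}{[(1+r)(1+s)]^{(d-1)/2}}}_{L^\sigma(\mu_d, dr)} \mds.
\end{equation*}
The crucial pointwise bound on the support $r \in (0, s/4)$ is $r^{d-1}/(1+r)^{(d-1)\sigma/2} \les (1+s)^{(d-1)(1-\sigma/2)}$, valid for $\sigma \leq 2$; combining this with the change of variable $u = \pm r \pm s$, which sends $(0, s/4)$ into an interval around $\pm s$ of length $\sim s$ on which $(1+|u|)^{d-1} \sim (1+s)^{d-1}$, one obtains the inner estimate
\begin{equation*}
\norm{\frac{\chi_{(0,s/4)}(r) W[f(s)](\pm r \pm s)}{[(1+r)(1+s)]^{(d-1)/2}}}_{L^\sigma(\mu_d, dr)} \les (1+s)^{-(d-1)/2} \norm{\frac{W[f(s)]}{(1+|\cdot|)^{(d-1)/2}}}_{L^\sigma(\mt)}.
\end{equation*}

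For $\sigma \in (p, \frac{2d}{d+1})$, \eqref{eqn:sigma} controls the right-hand side by $A(p, \infty) |f(s)|_B (1+s)^{-(d-1)/2}$; integrating in $s$ and applying H\"{o}lder together with $(1+s)^{-(d-1)/2} \in L^{\sigma'}(\mu_d)$ (which requires exactly $\sigma' > 2d/(d-1)$, i.e.\ $\sigma < \frac{2d}{d+1}$) yields the $L^\sigma$ bound with constant $A(p, \infty)$. For $\sigma = 1$ one cannot invoke \eqref{eqn:sigma}, so I would instead estimate the inner integral directly: using $r^{d-1}/(1+r)^{(d-1)/2} \leq (1+r)^{(d-1)/2} \les (1+s)^{(d-1)/2}$, the $(1+s)^{\pm(d-1)/2}$ factors cancel and the inner $r$-integral reduces to $\int_0^{s/4} |W[f(s)](\pm r \pm s)|\, dr \leq \norm{W[f(s)]}_{L^1(\R)} \les A(p,\infty) |f(s)|_B$ by \eqref{eqn:WL1}, so the outer integration gives $A(p,\infty) \norm{f}_{L^1(\mu_d, B)}$ directly, with no further H\"{o}lder step required. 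The main technical difficulty is the bookkeeping of $(1+s)$-weights so that the inner $L^\sigma$ norm ends up with precisely the factor $(1+s)^{-(d-1)/2}$ matching the integrability threshold of the outer H\"{o}lder inequality; this balance is also where the condition $p < \frac{2d}{d+1}$ is used.
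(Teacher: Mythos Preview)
Your proposal is correct and follows the same overall strategy as the paper: reduce to strong $L^\sigma$ bounds at $\sigma=1$ and at some $\sigma\in(p,\tfrac{2d}{d+1})$, invoking \eqref{eqn:WL1} and \eqref{eqn:sigma} respectively, then interpolate. The only difference is in the order of operations: the paper first substitutes $s\to s\pm r$ so that the argument of $W$ depends on $s$ alone, then applies Minkowski in $r$ and afterwards swaps the order of integration via H\"{o}lder (exactly as in Lemma~\ref{lem:sm}); you instead apply Minkowski in $s$ first, then perform the change of variable in $r$ inside the inner norm and finish with a single H\"{o}lder step against $(1+s)^{-(d-1)/2}\in L^{\sigma'}(\mu_d)$, which is arguably a bit more direct and parallels the $Hf$ argument.
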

Proposition \ref{prop:em} is a consequence of the following lemma by real interpolation.
\begin{lem} Let $d$ and $p$ as in Proposition \ref{prop:em}. Then 
\begin{equation*}
\norm{Ef }_{L^{\sigma}(\mu_d)} \les A(p,\infty) \norm{f}_{L^\sigma(\mu_d,B)}
\end{equation*}
for $\sigma=1$ and $p<\sigma<\frac{2d}{d+1}$.
\end{lem}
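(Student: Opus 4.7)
The strategy mirrors the proof of Lemma~\ref{lem:sm}, exploiting a feature specific to $E$: since the integration region imposes $s \geq 4r$, one has $|{\pm}r{\pm}s| \sim s$ and $(1+|{\pm}r{\pm}s|) \sim (1+s)$ throughout. This comparability will allow us to trade the factor $(1+s)^{(d-1)/2}$ in the denominator for $(1+|u|)^{(d-1)/2}$ after the change of variable $u = \pm r \pm s$, at which point the kernel bounds \eqref{eqn:WL1} and \eqref{eqn:sigma} become applicable.

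For $\sigma=1$, I would first apply Fubini to place the $s$-integral outside. In the resulting inner integral over $r \in [0, s/4]$, the function $r \mapsto r^{d-1}/(1+r)^{(d-1)/2}$ is monotone increasing on $[0,\infty)$, hence bounded on $[0,s/4]$ by a constant multiple of $(1+s)^{(d-1)/2}$, which cancels the outer $(1+s)^{-(d-1)/2}$ factor. After the substitution $u = \pm r \pm s$, the inner integral is dominated by $\norm{W[f(s)]}_{L^1(\R)}$, which by \eqref{eqn:WL1} is $\les A(p,\infty)|f(s)|_B$. Integrating in $s$ finishes the $L^1$ estimate.

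For $p < \sigma < \frac{2d}{d+1}$, the plan is to apply Minkowski's integral inequality to pull the $L^\sigma(\mu_d, dr)$-norm inside the $s$-integral. The same monotonicity argument again bounds $r^{d-1}/(1+r)^{\sigma(d-1)/2}$ on $[0, s/4]$ pointwise by $s^{d-1}/(1+s)^{\sigma(d-1)/2}$ (valid since $\sigma < 2$). After the change of variable $u = \pm r \pm s$ and using $(1+s) \sim (1+|u|)$ on the effective range $|u| \sim s$, the inner integral reduces to the $L^\sigma(\tilde{\mu}_d)$-norm of $W[f(s)]/(1+|\cdot|)^{(d-1)/2}$, times an explicit power-weight $h(s)$; \eqref{eqn:sigma} controls this norm by $A(p,\infty)|f(s)|_B$. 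To close the estimate, one applies H\"older in $s$ to bound $\int h(s)|f(s)|_B \,ds$ by $\norm{f}_{L^\sigma(\mu_d,B)} \cdot \norm{h(s)/s^{(d-1)/\sigma}}_{L^{\sigma'}(ds)}$; the integrability of the latter $L^{\sigma'}$-weight reduces precisely to $d/\sigma > (d+1)/2$, i.e.\ to $\sigma < \frac{2d}{d+1}$, matching the claimed range.

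The main obstacle is the bookkeeping of weights: one must verify that the exponents of $s$ and $(1+s)$ surviving the change of variable combine with the measure $\mds$ to yield a weight whose $L^{\sigma'}$-norm in the final H\"older step is finite precisely up to the critical exponent $\sigma = \frac{2d}{d+1}$. This is the step that dictates the upper end of the range of $\sigma$ in the statement, and it reflects the same arithmetic constraint as the one appearing at the end of the proof of Lemma~\ref{lem:sm}.
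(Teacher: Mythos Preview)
Your plan is correct and uses the same ingredients as the paper (Minkowski, a change of variable exploiting $|{\pm}r{\pm}s|\sim s$, and the bounds \eqref{eqn:WL1}, \eqref{eqn:sigma}), but arranged differently. The paper first substitutes $s\to s\pm r$ so that $W$ is evaluated at $\pm s$ while $f$ sits at $s\pm r$, applies Minkowski, and then substitutes back in $r$ to arrive at
\[
\int_0^\infty \Bigl( \int_{0}^{\infty} [W[f(r)](\pm s)]^\sigma\, r^{d-1}dr \Bigr)^{1/\sigma} ds,
\]
which is literally the expression \eqref{eqn:sm1} from the $S_m$ lemma with the roles of $r$ and $s$ interchanged; the endgame is then identical to that of Lemma~\ref{lem:sm}. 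Your route instead keeps $s$ as the argument of $f$ throughout, changes variable in the inner $r$-integral to expose $\|W[f(s)]\|_{L^1}$ or $\|W[f(s)]/(1+|\cdot|)^{(d-1)/2}\|_{L^\sigma(\tilde\mu_d)}$ directly, and closes with a H\"older step in $s$ whose weight integrability reduces to $\sigma<\tfrac{2d}{d+1}$. Both arguments are equally short; the paper's version has the cosmetic advantage of reducing verbatim to the $S_m$ computation, while yours avoids the double change of variable.
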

\begin{proof}
By the change of variable $s\to s\pm r$,
\begin{equation*}
Ef(r) \les \int_{3r}^\infty W[f(s\pm r)](\pm s) r^{-(d-1)/2} s^{(d-1)/2} ds.
\end{equation*}
By Minkowski's inequality
\begin{align*}
\norm{Ef}_{L^\sigma(\mu_d)} &\les \int_0^\infty \left( \int_0^{s/3} [W[f(s\pm r)](\pm s)]^\sigma r^{(d-1)(1-\frac{\sigma}{2})}dr \right)^{1/\sigma} s^{(d-1)/2} ds \\
&\les \int_0^\infty \left( s^{(d-1)(1-\frac{\sigma}{2})} \int_0^{s/3} [W[f(s\pm r)](\pm s)]^\sigma dr \right)^{1/\sigma} s^{(d-1)/2} ds \\
&\les \int_0^\infty \left( s^{d-1}  \int_{2s/3}^{4s/3} [W[f(r)](\pm s)]^\sigma dr \right)^{1/\sigma} ds \\
&\les \int_0^\infty \left( \int_{0}^{\infty} [W[f(r)](\pm s)]^\sigma r^{d-1}dr \right)^{1/\sigma} ds.
\end{align*}

The rest of the proof is similar to that of Lemma \ref{lem:sm}. Indeed, we apply Fubini's theorem and \eqref{eqn:WL1} for $\sigma=1$, and H\"{o}lder's inequality and \eqref{eqn:sigma} for $p<\sigma<\frac{2d}{d+1}$.
\end{proof}

\subsection*{Acknowledgement}
This paper will be a part of the author's PhD thesis. Part of this research was carried out while the author was visiting the Hausdorff Research Institute for Mathematics (HIM) in Bonn. The author would like to thank HIM and the organizers of the trimester program on Harmonic Analysis and Partial Differential Equations for their hospitality and generous support during the visit. This work was supported in part by the National Science Foundation.

\providecommand{\bysame}{\leavevmode\hbox to3em{\hrulefill}\thinspace}
\providecommand{\MR}{\relax\ifhmode\unskip\space\fi MR }
% \MRhref is called by the amsart/book/proc definition of \MR.
\providecommand{\MRhref}[2]{%
  \href{http://www.ams.org/mathscinet-getitem?mr=#1}{#2}
}
\providecommand{\href}[2]{#2}


\begin{thebibliography}{10}

\bibitem{CRV}
Anthony Carbery, Jos{\'e}~L. Rubio~de Francia, and Luis Vega, \emph{Almost
  everywhere summability of {F}ourier integrals}, J. London Math. Soc. (2)
  \textbf{38} (1988), no.~3, 513--524. \MR{972135 (90e:42033)}

\bibitem{Col}
Leonardo Colzani, Giancarlo Travaglini, and Marco Vignati,
  \emph{Bochner-{R}iesz means of functions in weak-{$L\sp p$}}, Monatsh. Math.
  \textbf{115} (1993), no.~1-2, 35--45. \MR{1223243 (94h:42022)}

\bibitem{GaSe}
Gustavo Garrig{\'o}s and Andreas Seeger, \emph{Characterizations of {H}ankel
  multipliers}, Math. Ann. \textbf{342} (2008), no.~1, 31--68. \MR{2415314
  (2009h:42011)}

\bibitem{GaSeM}
\bysame, \emph{A note on maximal operators associated with {H}ankel
  multipliers}, Rev. Un. Mat. Argentina \textbf{50} (2009), no.~2, 137--148.
  \MR{2656531 (2011f:42011)}

\bibitem{HNS2}
Yaryong Heo, F{\"e}dor Nazarov, and Andreas Seeger, \emph{On radial and conical
  {F}ourier multipliers}, J. Geom. Anal. \textbf{21} (2011), no.~1, 96--117.
  \MR{2755678 (2011k:42024)}

\bibitem{HNS}
\bysame, \emph{Radial {F}ourier multipliers in high dimensions}, Acta Math.
  \textbf{206} (2011), no.~1, 55--92. \MR{2784663 (2012c:42028)}

\bibitem{Kan}
Y{\=u}ichi Kanjin, \emph{Convergence almost everywhere of {B}ochner-{R}iesz
  means for radial functions}, Ann. Sci. Kanazawa Univ. \textbf{25} (1988),
  11--15. \MR{964084 (90b:42034)}

\bibitem{LRS}
Sanghyuk Lee, Keith~M. Rogers, and Andreas Seeger, \emph{Square functions and
  maximal operators associated with radial {F}ourier multipliers}, Advances in
  {A}nalysis: {T}he {L}egacy of {E}lias {M}. {S}tein, Princeton Mathematical
  Series, Princeton University Press, 2014, pp.~273--302.

\bibitem{LS}
Sanghyuk Lee and Andreas Seeger, \emph{On radial {F}ourier multipliers and
  almost everywhere convergence}, J. Lond. Math. Soc. (2) \textbf{91} (2015),
  no.~1, 105--126. \MR{3338611}

\bibitem{Oneil}
Richard O’Neil, \emph{Convolution operators and $l(p,q)$ spaces}, Duke
  Mathematical Journal \textbf{30} (1963), no.~1, 129--142.

\bibitem{StSin}
Elias~M. Stein, \emph{Singular integrals and differentiability properties of
  functions}, Princeton Mathematical Series, No. 30, Princeton University
  Press, Princeton, N.J., 1970. \MR{0290095 (44 \#7280)}

\bibitem{StWe}
Elias~M. Stein and Guido Weiss, \emph{Introduction to {F}ourier analysis on
  {E}uclidean spaces}, Princeton University Press, Princeton, N.J., 1971,
  Princeton Mathematical Series, No. 32. \MR{0304972 (46 \#4102)}

\bibitem{TaoB}
Terence Tao, \emph{The weak-type endpoint {B}ochner-{R}iesz conjecture and
  related topics}, Indiana Univ. Math. J. \textbf{47} (1998), no.~3,
  1097--1124. \MR{1665753 (2000a:42024)}

\end{thebibliography}
\end{document}